\def\Int{\mathop{\fam0 Int}}
\def\R{{\mathbb R}}
\long\def\comment#1\endcomment{}
\newcommand{\jonly}[1]{}
\newcommand{\aronly}[1]{#1}
    \theoremstyle{theorem}
         \newtheorem{theorem}{Theorem}
         \newtheorem{lemma}[theorem]{Lemma}
    \theoremstyle{definition}
         \newtheorem{remark}[theorem]{Remark}
\begin{document}

\title{A short exposition of S. Parsa's theorems on intrinsic linking and non-realizability}

\author{A. Skopenkov}

\thanks{Supported in part by the Russian Foundation for Basic Research Grant No. 19-01-00169.
\newline
I am grateful to A. Kupavskii, S. Parsa, M. Skopenkov and M. Tancer for useful discussions.
\newline
This is full version of a paper published in Discr. Comp. Geom.
\newline
MSC: 57M25 ,  57Q45}


\date{}

\maketitle

\abstract
We present a short exposition of the following results by S. Parsa.

{\it Let $L$ be a graph such that the join $L*\{1,2,3\}$ (i.e. the union of three cones over $L$ along their common bases) piecewise linearly (PL) embeds into $\mathbb R^4$.
Then  $L$  admits a PL embedding into $\mathbb R^3$ such that any two disjoint cycles have zero linking number.}

{\it There is $C$ such that every 2-dimensional simplicial complex having $n$ vertices and embeddable into $\mathbb R^4$ contains less than $Cn^{8/3}$ simplices of dimension 2.}

We also present
the analogue of the second result for intrinsic linking.
\endabstract

\bigskip
This paper provides short proofs of Theorems \ref{t:parsa}, \ref{c:parsa} and \ref{t:flosko} below.
Let $[k]:=\{1,\ldots,k\}$.

\begin{theorem}[see Remark \ref{r:sparsa}]\label{t:parsa} Let $L$ be a graph such that the join $L*[3]$
(i.e. the union of three cones over $L$ along their common bases) piecewise linearly (PL) embeds into $\R^4$.
Then  $L$  admits a PL embedding into $\R^3$ such that any two disjoint cycles have zero linking number.
\end{theorem}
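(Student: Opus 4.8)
The plan is to reduce the statement to the classical fact that \emph{linking numbers in a $3$-sphere are computed by intersections of bounding $2$-chains in an acyclic $4$-dimensional filling}. Precisely, I would first record: if $W$ is a compact acyclic PL $4$-manifold with $\partial W\cong S^3$, if $\alpha,\beta\subset\partial W$ are disjoint $1$-cycles, and if $a,b\subset W$ are $2$-chains with $\partial a=\alpha$, $\partial b=\beta$, then $\lk(\alpha,\beta)=a\cdot b$. This is well defined because any two fillings of $\alpha$ differ by a $2$-cycle, hence (as $W$ is acyclic) by $\partial W_0$ for a $3$-chain $W_0\subset W$, and $\partial W_0\cdot b=\pm\,W_0\cdot\beta=0$ once $W_0$ is pushed off $\partial W$. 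Thus it suffices to realize $L$ in the boundary $S^3=\partial W$ of such a filling so that \emph{disjoint} cycles receive \emph{disjoint} bounding $2$-chains in $W$: then $\lk=0$ for every disjoint pair.

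Write $v_1,v_2,v_3$ for the apexes and $C_i:=F(v_i*L)$. The decisive elementary observation is that for disjoint cycles $A,B$ and distinct apexes the subcomplexes $v_1*A$ and $v_2*B$ have disjoint vertex sets, hence are disjoint subsets of $L*[3]$, so that $F(v_1*A)$ and $F(v_2*B)$ are \emph{disjoint} disks bounding $F(A)$ and $F(B)$. To house them in an acyclic filling I would use the \emph{third} cone: since $C_3$ is a cone over a graph it is collapsible, so a regular neighbourhood $N\subset S^4=\R^4\cup\infty$ is a PL $4$-ball, and $W:=\Cl(S^4\setminus N)$ is, by Mayer--Vietoris, a compact acyclic $4$-manifold with $\partial W=\partial N\cong S^3$. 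Cutting along $\partial N$ truncates $C_1,C_2$ to $2$-chains whose boundaries are disjoint parallel copies $L^{(1)},L^{(2)}$ of $L$ on $\partial W$, and the disjointness of $v_1*A$ and $v_2*B$ survives truncation. Hence for disjoint $A,B$ the truncations bound $A^{(1)}\subset L^{(1)}$ and $B^{(2)}\subset L^{(2)}$ by disjoint $2$-chains in $W$, so $\lk(A^{(1)},B^{(2)})=0$. This is exactly where three cones are essential: one cone manufactures the filling $W$, the other two bound the two cycles; two cones give only the suspension $\Sigma L$, whose cone-disks naturally lie on opposite sides of $L$ and make the analogous computation collapse to the vacuous identity $\lk=\lk$.

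I expect the main obstacle to be producing a \emph{single} embedding $g\colon L\hookrightarrow S^3=\partial W$ for which all these vanishings are simultaneously the linking numbers of $g$. Setting $g:=L^{(1)}$, one uses that $L^{(1)}$ and $L^{(2)}$ are disjoint parallel push-offs of $L$ and that $A,B$ are disjoint, so the push-off carrying $B^{(1)}$ to $B^{(2)}$ can be chosen away from $A^{(1)}$; then $\lk(g(A),g(B))=\lk(A^{(1)},B^{(1)})=\lk(A^{(1)},B^{(2)})=0$. Making this uniform over all pairs---checking that one push-off of each cycle works against every disjoint partner, and that the truncated cones become embedded, pairwise-correctly-disjoint $2$-chains after a single general-position adjustment---is the technical heart. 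The care lies in the PL regular-neighbourhood bookkeeping in dimension $4$, which I would keep purely homological (acyclicity of $W$ and $\partial W\cong S^3$ suffice for the linking-equals-intersection formula) so as to avoid the Schoenflies problem altogether. Granting this, the required PL embedding of $L$ into $\R^3$ is $g$ followed by the identification $\partial W\setminus\{\mathrm{pt}\}\cong\R^3$.
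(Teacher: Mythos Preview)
Your core idea---producing disjoint bounding $2$-disks in a $4$-ball for disjoint cycles, whence zero linking---is exactly the paper's. But you have made the construction harder than necessary, and this creates a genuine gap at precisely the step you flag as the ``technical heart''.

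The paper does not take a regular neighbourhood of an entire cone. It takes a small PL $4$-ball $\Delta^4$ around a single \emph{apex}, say vertex $1$. Since the three apex vertices are pairwise non-adjacent in $L*[3]$, the link of vertex $1$ in $L*[3]$ is precisely $L$, so $\partial\Delta^4\cap(L*[3])$ is already a \emph{single} copy of $L$ in $S^3$; this is the embedding $g$. For disjoint cycles $\beta,\gamma\subset L$ one then uses the truncated suspensions $(\beta*\{1,2\})\setminus\Int\Delta^4$ and $(\gamma*\{1,3\})\setminus\Int\Delta^4$ as the bounding disks in the exterior ball. In $L*[3]$ these two subcomplexes share only the vertex $1$, which has been removed; hence they are disjoint and bound the \emph{same} copies $\beta,\gamma\subset\partial\Delta^4$. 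No parallel copies, no push-off, no bookkeeping.

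Your choice of $N$ around the full cone $C_3$ forces the other two cones to meet $\partial N$ in two \emph{different} copies $L^{(1)},L^{(2)}$ of $L$, so the disjoint-disk argument yields only $\lk(A^{(1)},B^{(2)})=0$. To reach $\lk(A^{(1)},B^{(1)})=0$ you assert that $B^{(1)}$ can be pushed to $B^{(2)}$ inside $\partial N\setminus A^{(1)}$. But that is equivalent to $\lk(A^{(1)},B^{(1)}-B^{(2)})=0$, which is what is to be proved; so as stated the step is circular. The intuition that $L^{(1)},L^{(2)}$ are ``parallel'' is not backed by any product structure $L\times D^2\subset\partial N$---the normal geometry of $L$ in $\partial N$ depends on the embedding of $C_3$---and the obvious annulus between $B^{(1)}$ and $B^{(2)}$ (through $B\subset C_3$) lies in $N$, not in $\partial N$ or in $W$, so it does not witness the required homology. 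Likewise, computing $\lk(A^{(1)},B^{(1)})$ directly from $D_1^A$ and $D_1^B$ fails because both of those disks pass through the apex $v_1\in W$, and their intersection number there is controlled by the embedding, not by the combinatorics.

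The one-line fix is to shrink $N$ to a ball around the apex $v_3$ alone; then your argument collapses to the paper's.
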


\begin{proof}
Consider  $L*[3]$ as a subcomplex of some triangulation of $\R^4$.
Then there is a small general position 4-dimensional PL ball $\Delta^4$ containing the point $\emptyset*1\in\R^4$.
Hence the intersection $\partial\Delta^4\cap(L*[3])$ is PL homeomorphic to $L$.
Let us prove that this very embedding of $L$ into the 3-dimensional sphere $\partial\Delta^4$ satisfies the required property.

Take any two disjoint oriented closed polygonal lines $\beta,\gamma\subset\partial\Delta^4\cap(L*[3])\cong L$.
Then $(\beta* \{1,2\})-\Int\Delta^4$ and  $(\gamma*\{1,3\}) -\Int\Delta^4$
are two disjoint 2-dimensional PL disks in $\R^4-\Delta^4$ whose boundaries are $\beta$ and $\gamma$.
Hence $\beta$ and $\gamma$ have zero linking number in the 3-dimensional sphere $\partial\Delta^4$
(by the following well-known lemma applied to the 4-dimensional ball  $S^4-\Int\Delta^4$).
\end{proof}


\begin{lemma}\label{l:3and4}
If two disjoint oriented closed polygonal lines in the 3-dimensional sphere $\partial D^4$ bound two disjoint 2-dimensional PL disks in the 4-dimensional ball $D^4$,
then the polygonal lines have zero integer linking number in $\partial D^4$.
\end{lemma}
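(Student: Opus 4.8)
The plan is to express the three-dimensional linking number as a four-dimensional intersection number, so that disjointness of the two disks makes it vanish automatically.

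First I would fix a convenient definition. For disjoint oriented PL circles $\beta,\gamma\subset\partial D^4=S^3$, choose a PL $2$-chain $F\subset S^3$ with $\partial F=\beta$ in general position with respect to $\gamma$, and set $\lk(\beta,\gamma):=F\cdot\gamma$, the algebraic count of intersection points of $F$ with $\gamma$ in $S^3$. This is well defined: any two such chains differ by a $2$-cycle in $S^3$, which bounds since $H_2(S^3)=0$, and hence meets $\gamma$ with algebraic multiplicity $0$.

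The heart of the argument is to consider, for a properly embedded oriented PL surface $A\subset D^4$ with $A\cap\partial D^4=\partial A=\beta$, the algebraic intersection number $A\cdot B_\gamma$ of $A$ with the given disk $B_\gamma$ inside $D^4$ (a finite signed count of interior crossing points, after a small general-position perturbation). I would prove two facts. \textbf{Independence.} The number $A\cdot B_\gamma$ does not depend on the choice of $A$: if $A'$ is another such surface, then $Z:=A-A'$ is a closed $2$-cycle whose support meets $\partial D^4$ only along $\beta$, hence is disjoint from $\gamma$; since $H_2(D^4)=0$ we may write $Z=\partial W$ for a $3$-chain $W$, and, because $\beta\cap\gamma=\emptyset$, we may push $W$ off the boundary away from $\gamma$ so that $W\cap\gamma=\emptyset$. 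Then the compact $1$-manifold $W\cap B_\gamma$ has boundary exactly the crossing points counted by $Z\cdot B_\gamma$ (its possible boundary on $\gamma$ being empty), so $Z\cdot B_\gamma=0$, i.e. $A\cdot B_\gamma=A'\cdot B_\gamma$. \textbf{Evaluation.} Taking $A$ to be a shallow collar push-in of a Seifert chain $F$ for $\beta$ and using the product structure $\gamma\times[0,\epsilon]$ of $B_\gamma$ near its boundary, every crossing point of $A$ with $B_\gamma$ lies in the collar, and these points correspond, with matching signs, to the points of $F\cap\gamma$; hence $A\cdot B_\gamma=F\cdot\gamma=\lk(\beta,\gamma)$.

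Combining the two facts finishes the proof: by Evaluation the push-in $A$ satisfies $A\cdot B_\gamma=\lk(\beta,\gamma)$, by Independence $A\cdot B_\gamma=B_\beta\cdot B_\gamma$ (both $A$ and $B_\beta$ bound $\beta$), and by hypothesis $B_\beta\cap B_\gamma=\emptyset$ so $B_\beta\cdot B_\gamma=0$; therefore $\lk(\beta,\gamma)=0$. I expect the main obstacle to be the bookkeeping hidden in the two boldfaced facts, namely verifying that the bounding $3$-chain $W$ can genuinely be taken disjoint from $\gamma$, and matching the orientation signs so that the four-dimensional count agrees with the three-dimensional linking number on the nose rather than up to sign. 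Both are routine general-position and collar arguments in PL intersection theory, but they are the only points that require real care.
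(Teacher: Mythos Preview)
Your argument is correct and rests on the same identification that the paper uses: the linking number in $S^3=\partial D^4$ equals a $2$--$2$ intersection number in dimension four, which the disjointness hypothesis forces to vanish. The execution differs, though. You stay inside $D^4$, prove an Independence lemma (that $A\cdot B_\gamma$ depends only on $\partial A$) via a bounding $3$-chain $W$, and then Evaluate on a pushed-in Seifert surface. The paper instead passes to an ambient $\R^4\supset D^4$: it caps $\beta$ by a cone $B'$ lying in the \emph{exterior} $\R^4-D^4$ and caps $\gamma$ by a singular disk $\Gamma'$ inside $\partial D^4$, obtaining closed $2$-cycles $B-B'$ and $\Gamma-\Gamma'$ in $\R^4$ whose only mutual intersection points are those of $\beta$ with $\Gamma'$; since any two $2$-cycles in $\R^4$ have algebraic intersection zero, one gets $\lk(\beta,\gamma)=\beta\cdot_{\partial D^4}\Gamma'=(B-B')\cdot_{\R^4}(\Gamma-\Gamma')=0$ in one line. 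The paper's route buys economy: by closing up the cycles it replaces your Independence argument (and the care needed to keep $W$ off $\gamma$, which you rightly flag) with the blanket fact $H_2(\R^4)=0$. Your route buys self-containment in $D^4$ and makes the Seifert-surface origin of the linking number more visible. Both are standard and either would be acceptable here.
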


\begin{proof}\footnote{This proof is well-known.
For elementary versions of this well-known lemma see e.g. \cite[Lemmas 3.5 and 3.11]{Sk14}.}
Denote by $B,\Gamma\subset D^4$ the two disjoint oriented disks bounded by the polygonal lines
$\beta,\gamma\subset\partial D^4$.
Denote by $B'\subset\R^4-D^4$ an oriented disk (e.g. a cone) bounded by $\beta$.
Denote by $\Gamma'\subset\partial D^4$ a general position oriented singular disk (e.g. singular cone) bounded by $\gamma$.
We have $\beta\cap\Gamma'=(B\cup B')\cap(\Gamma\cup\Gamma')$.
Denote by the same letters integer chains carried by $\beta,\gamma,B,\Gamma$.
Denote by $\cdot_M$ the algebraic intersection of integer chains in $M$.
Then by general position the linking number of $\beta$ and $\gamma$ is
$\beta\cdot_{\partial D^4}\Gamma'=(B-B')\cdot_{\R^4}(\Gamma-\Gamma')=0$.
\end{proof}

\begin{remark}\label{r:sparsa}
(a) Theorem \ref{t:parsa} trivially generalizes to a $d$-dimensional finite simplicial complex $L$ and embeddings $L*[3]\to\R^{2d+2}$, $L\to\R^{2d+1}$.
For $d\ne2$ and a $d$-complex $L$ embeddability of $L*[3]$ into $\R^{2d+2}$ even implies embeddability of $L$ into $\R^{2d}$.
For the case $d=1$ considered in Theorem \ref{t:parsa}
this improvement follows from a theorem of Gr\"unbaum \cite{Gr69} (whose proof is more complicated).
For the case $d\ge3$ this improvement is proved in
\cite[$(iv)\Rightarrow(i)$ of Corollary 4.4]{MS06}, \cite{Pa20a}, \cite{PS20}.

(b) Theorem \ref{t:parsa} is formally a corollary of \cite[Theorem 1]{Pa15} but is essentially a restatement of \cite[Theorem 1]{Pa15} accessible to non-specialists.
In spite of being much shorter, the above proof of Theorem \ref{t:parsa} is not an alternative proof  comparatively to \cite[\S3]{Pa15} but is just a different exposition avoiding sophisticated language.
The above proof of Theorem \ref{t:parsa} is analogous to \cite[Example 2]{Sk03} where a relation between intrinsic linking in 3-space and non-realizability in 4-space was found and used.
Although the proof is simple, it easily generalizes to non-trivial results like a simple solution of the Menger 1929 conjecture and its generalizations \cite[Example 2, Lemmas 2 and 1']{Sk03}, see survey \cite{Sk14}.
\end{remark}




The following Theorem \ref{c:parsa}.a is a higher-dimensional generalization of upper estimation on the number of edges in a planar graph.

An embedding of a simplicial complex into $\R^{2d+1}$ is called {\bf linkless} if the images of any two $d$-dimensional spheres have zero linking number.

\begin{theorem}\label{c:parsa} (S. Parsa) (a) For every $d$ there is $C$ such that for every $n$ every $d$-dimensional simplicial complex having $n$ vertices and embeddable into $\R^{2d}$ contains less than
$Cn^{d+1-3^{1-d}}$ simplices of dimension $d$.

(b) For every $d$ there is $C$ such that for every $n$ every $d$-dimensional simplicial complex having $n$ vertices and linklessly embeddable into $\R^{2d+1}$ contains less than
$Cn^{d+1-4^{1-d}}$ simplices of dimension $d$.
\end{theorem}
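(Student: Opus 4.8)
The plan is to reduce the counting bound for complexes embeddable in $\R^{2d}$ (part (a)) to a statement about the non-existence of a highly-connected substructure, and then invoke an extremal hypergraph argument. The key link to the earlier material is Theorem~\ref{t:parsa} and its higher-dimensional generalization noted in Remark~\ref{r:sparsa}(a): if too many $d$-simplices are present, one should be able to locate a subcomplex whose triple join $L*[3]$ embeds, forcing via Theorem~\ref{t:parsa} a linkless embedding of $L$ into $\R^{2d-1}$, which for suitable $L$ is impossible. Let me sketch this.

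\textbf{Approach.} First I would set up the extremal-combinatorics framework. The natural statement to target is a forbidden-subcomplex condition: I claim there is a fixed $d$-complex $K_d$ (built from joins, e.g. an iterated join of the type $[3]*[3]*\cdots$, whose structure I expect to be the self-join realizing the van~Kampen obstruction) that does not embed into $\R^{2d}$, so that any $n$-vertex $d$-complex $X$ with embedding into $\R^{2d}$ must avoid $K_d$ as a subcomplex in an appropriate (e.g. ``colorful'' or topological) sense. Second, I would translate ``$X$ has at least $Cn^{d+1-3^{1-d}}$ $d$-simplices'' into ``the $d$-uniform hypergraph of $d$-simplices is dense enough to contain $K_d$,'' using a hypergraph Tur\'an-type or supersaturation estimate; the exponent $d+1-3^{1-d}$ should fall out of the size and structure of the forbidden configuration. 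Third, once $K_d$ (or a subcomplex that behaves like $L*[3]$) is located inside $X$, Theorem~\ref{t:parsa} in its $\R^{2d+2}$-version gives a contradiction, since the piece playing the role of $L$ is chosen so that \emph{every} PL embedding of it into $\R^{2d-1}$ has a pair of disjoint spheres with nonzero linking number (a van~Kampen--Flores-type obstruction), contradicting the conclusion of the theorem.

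\textbf{Part (b).} For the linkless statement I would run the parallel argument using the linking-number refinement rather than embeddability. Here the relevant input is the $\R^{2d+1}$ analogue of Theorem~\ref{t:parsa}: the join trick now relates a linkless embedding into $\R^{2d+1}$ to embeddability/linkless-embeddability of a smaller complex into $\R^{2d-1}$. The combinatorial core is the same supersaturation step, but applied to a forbidden configuration tied to intrinsic linking; the different base of the iterated exponent ($4^{1-d}$ instead of $3^{1-d}$) should reflect that the ``linkless'' forbidden object has a $4$-fold (rather than $3$-fold) join structure, i.e.\ one extra cone is needed to record the linking obstruction in addition to the embedding obstruction. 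I would therefore expect the proof to be formally identical after replacing $[3]$ by $[4]$ and ``embeds'' by ``embeds linklessly'' throughout.

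\textbf{Main obstacle.} The hard part will be the second step: making the passage from a density bound on $d$-simplices to the existence of the forbidden join-configuration \emph{as a subcomplex} (not merely as an abstract sub-hypergraph), and getting the precise exponent $d+1-3^{1-d}$ rather than a cruder bound. A naive Tur\'an argument on the $d$-uniform hypergraph ignores the simplicial (downward-closed) structure and the colorful/join pattern one actually needs, so the delicate point is to run the counting within the right layered or partite structure so that a dense slice of $d$-simplices forces a complete multipartite join pattern of the correct type. I expect the recursion on $d$ to be what produces the iterated exponent $3^{1-d}$: each drop in dimension via Theorem~\ref{t:parsa} should multiply the relevant ``deficiency'' by a factor of $1/3$, and tracking this recursion cleanly — together with absorbing the lower-dimensional simplex counts, which are automatically controlled once the $d$-simplices are — is where the real work lies.
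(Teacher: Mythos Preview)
Your high-level picture --- a forbidden join-subcomplex plus an extremal hypergraph count --- is exactly what the paper does, and you have correctly guessed that the forbidden objects are $[3]^{*(d+1)}$ and $[4]^{*(d+1)}$. But there is a genuine misidentification in your plan that would cost you the proof.

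\medskip
\textbf{Theorem~\ref{t:parsa} is not used at all.} You propose to locate $L*[3]$ inside $X$ and then invoke Theorem~\ref{t:parsa} to pass to a linkless embedding of $L$ in $\R^{2d-1}$, iterating this to generate the exponent $3^{1-d}$. The paper does something much simpler on the topological side: it just cites, as a black box, the Flores theorem that $[3]^{*(d+1)}$ does not PL embed in $\R^{2d}$, and the analogous fact from \cite{Sk03} that $[4]^{*(d+1)}$ does not linklessly embed in $\R^{2d+1}$. No join trick, no cone argument, no Theorem~\ref{t:parsa}. (Your proposed iteration of Theorem~\ref{t:parsa} would in any case not close up: one application turns ``$L*[3]$ embeds in $\R^{2d}$'' into ``$L$ \emph{linklessly} embeds in $\R^{2d-1}$'', and to iterate you would need the linkless hypothesis as input, which Theorem~\ref{t:parsa} does not accept.)

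\medskip
\textbf{The recursion $3^{1-d}$ is purely combinatorial.} The entire content of Theorem~\ref{c:parsa} beyond the Flores-type facts is the extremal statement (Theorem~\ref{t:flosko}): a $d$-complex on $n$ vertices with no subcomplex homeomorphic to $[r]^{*(d+1)}$ has fewer than $Cn^{d+1-r^{1-d}}$ top simplices. This is proved by induction on $d$, and the induction is where the exponent comes from --- not from any topological reduction. The step is: if $K$ avoids $[r]^{*(d+1)}$, then for any $r$ vertices $v_1,\dots,v_r$ the intersection $\bigcap_i \lk(v_i)$ avoids $[r]^{*d}$; by the inductive hypothesis this intersection has at most $s=Cn^{d-r^{2-d}}$ faces of dimension $d-1$. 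One then applies a K\H{o}v\'ari--S\'os--Tur\'an-type lemma (Lemma~\ref{l:parsa}) to the sets $S_j=\lk(v_j)$ inside the set of $(d-1)$-simplices, obtaining
\[
\sum_j |S_j| \;\le\; r\bigl(n\cdot a^{1-1/r}s^{1/r}+a\bigr),\qquad a\le n^d,
\]
and since $\sum_j|S_j|$ counts each $d$-simplex $d+1$ times, the exponent works out to $d+1-r^{1-d}$. This is precisely the step you flagged as the ``main obstacle'' but left unspecified; in the paper it is the whole proof.

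\medskip
So: keep your forbidden-subcomplex framing, drop the appeal to Theorem~\ref{t:parsa}, replace it by a one-line citation of Flores (resp.\ \cite{Sk03}), and then actually carry out the link-intersection induction with Lemma~\ref{l:parsa}.
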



The result (a) improves analogous result with $Cn^{d+1-3^{-d}}$ \cite{De93} and is covered by the Gr\"unbaum-Kalai-Sarkaria conjecture (whose proof is announced in \cite{Ad18}; I did not check that proof).
See  \cite[Theorems 3 and 4]{Pa15} and \cite{Pa20}.

The Flores 1934 Theorem states that the $d+1$ join power $[3]^{*(d+1)}=[3]*\ldots*[3]$ ($d+1$ copies of $[3]$) is not (PL or topologically) embeddable into $\mathbb R^{2d}$ \cite[\S5]{Ma03}.
(We have $[3]^{*2}=K_{3,3}$, so the case $d=1$ is even more classical.)
The $d+1$ join power $[4]^{*(d+1)}$ is not linklessly embeddable into
$\mathbb R^{2d+1}$ \cite[Lemma 1]{Sk03}.
(We have $[4]^{*2}=K_{4,4}$, so the case $d=1$ is due to Sachs.)
Theorem \ref{c:parsa} is implied by these results and the following theorem.

\begin{theorem}\label{t:flosko} For every $d,r$ there is $C$ such that for every $n$ every $d$-dimensional simplicial complex having $n$ vertices and not containing a subcomplex homeomorphic to $[r]^{*(d+1)}$ contains less than $Cn^{d+1-r^{1-d}}$ simplices of dimension $d$.
\end{theorem}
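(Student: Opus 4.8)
The plan is to induct on $d$, reducing the $d$-dimensional problem to the $(d-1)$-dimensional one through vertex links, the point being that $\lk(v,[r]^{*(d+1)})=[r]^{*d}$ for every vertex $v$. Write $\alpha_d:=d+1-r^{1-d}$ for the target exponent and record the identity $\alpha_d=1+\alpha_{d-1}+(r-1)r^{1-d}$, which says that each inductive step may spend a polynomial factor $n^{(r-1)r^{1-d}}$ beyond the trivial loss of $n^{1}$. The base case $d=1$ asks that a graph on $n$ vertices with no subcomplex homeomorphic to $[r]^{*2}=K_{r,r}$, i.e. no subdivision of $K_{r,r}$, have $O(n)=O(n^{\alpha_1})$ edges; this is the classical fact that a graph excluding a fixed topological minor is sparse (a graph of average degree at least $cr^2$ contains a subdivision of $K_{2r}\supseteq K_{r,r}$, by Bollob\'as--Thomason and Koml\'os--Szemer\'edi).

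For the inductive step, suppose the statement holds in dimension $d-1$ with constant $C'$, and let $K$ be a $d$-complex on $n$ vertices with $f_d>Cn^{\alpha_d}$ top simplices, for a large $C$ to be chosen. Call a vertex $v$ \emph{rich} if $\lk(v,K)$ has more than $C'n^{\alpha_{d-1}}$ faces of dimension $d-1$; by the inductive hypothesis the link of each rich vertex then contains a subcomplex homeomorphic to $[r]^{*d}$. Every non-rich vertex lies in at most $C'n^{\alpha_{d-1}}$ top simplices, so all non-rich vertices together lie in at most $C'n^{1+\alpha_{d-1}}=C'n^{d+1-r^{2-d}}$ top simplices; since $d+1-r^{2-d}<\alpha_d$, for $C$ large the rich vertices carry almost all of $f_d$, whence there are many rich vertices with large links. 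The construction to aim for is a coning step: once we have $r$ vertices $a_1,\dots,a_r$ and a \emph{single} subcomplex $X$ homeomorphic to $[r]^{*d}$ with $X\subseteq\bigcap_{i=1}^r\lk(a_i)$ and the $a_i$ outside $X$, the subcomplex $\{a_1,\dots,a_r\}*X$ is homeomorphic to $[r]*[r]^{*d}=[r]^{*(d+1)}$, because the join of a fixed complex with homeomorphic complexes yields homeomorphic results. So it remains to locate $r$ rich vertices whose links share one common homeomorphic copy of $[r]^{*d}$.

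The main obstacle is exactly this last step: producing $r$ apexes with a \emph{common} copy in the intersection $\bigcap_{i=1}^r\lk(a_i)$, rather than merely one copy in each individual link. This is a Kov\'ari--S\'os--Tur\'an-type problem in which the vertices play the role of one side and the copies of $[r]^{*d}$ the other; indeed finding $[r]*[r]^{*d}$ is the ``bipartite'' analogue of finding $K_{r,r}=[r]*[r]$, and the polynomial room $n^{(r-1)r^{1-d}}$ isolated above is precisely what a double count needs in order to force $r$ apexes onto a common copy. The technical heart, and the place where the topological hypothesis is essential, is that a homeomorphic copy of $[r]^{*d}$ is a subdivision and so need not have bounded vertex support, which obstructs a naive pigeonhole over supports. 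I would handle this by strengthening the inductive statement so that the copy it produces sits on a controlled set of vertices (equivalently, by producing many copies via supersaturation and then counting incidences between rich vertices and copies), and then applying the double count to extract $r$ apexes sharing a single copy. Checking that this step costs no more than the available factor $n^{(r-1)r^{1-d}}$, and fixing $C,C'$ accordingly, completes the induction.
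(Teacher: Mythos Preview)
Your overall framework --- induction on $d$, the base case via the Bollob\'as--Thomason/Mader theorem, and the observation that a copy of $[r]^{*d}$ in $\bigcap_{i=1}^r\lk(a_i)$ yields $[r]^{*(d+1)}\subseteq K$ --- is exactly right and matches the paper. But you correctly diagnose, and then fail to close, the central gap: you never actually produce the $r$ apexes sharing a \emph{common} copy. Your proposed remedies (``strengthen the induction so the copy sits on a controlled vertex set'', ``supersaturation and incidence counting'') are hand-waving at precisely the point where the difficulty lives: a subdivision of $[r]^{*d}$ can use arbitrarily many vertices, so there is no bounded-size object to pigeonhole on, and you give no concrete strengthened statement that would survive the induction.

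The paper sidesteps this obstacle with a single twist you missed: apply the inductive hypothesis not to individual links of rich vertices, but directly to each \emph{$r$-fold intersection of links}. Your own coning observation says that if $K$ contains no subcomplex homeomorphic to $[r]^{*(d+1)}$, then for every $r$-tuple of vertices $v_1,\dots,v_r$ the complex $\bigcap_{i=1}^r\lk(v_i)$ contains no subcomplex homeomorphic to $[r]^{*d}$. By the inductive hypothesis, every such intersection therefore has at most $s\le C'n^{\alpha_{d-1}}$ faces of dimension $d-1$. Now you are in a pure K\H{o}v\'ari--S\'os--Tur\'an situation: take the ground set to be the $(d-1)$-simplices of $K$ (so $a<n^d$), take $m=n$ subsets $S_v=\{(d-1)\text{-faces in }\lk(v)\}$, and use that every $r$-wise intersection has size at most $s$. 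The resulting bound $\sum_v|S_v|\le r\bigl(n\,a^{1-1/r}s^{1/r}+a\bigr)$ gives exactly $O(n^{1+d(r-1)/r+(d-r^{2-d})/r})=O(n^{\alpha_d})$ top simplices, matching your exponent bookkeeping. No rich/non-rich dichotomy, no control on the size of subdivisions, and no strengthened inductive statement are needed.
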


Proof of Theorem \ref{t:flosko} is based on the following lemma similar to the estimation of the number of edges in a graph not containing $K_{s+1,a}$ (Kovari-Sos-Tura\'n Theorem).

\begin{lemma}\label{l:parsa} For every integers $r,m,a,s$ and subsets $S_1,\ldots,S_m\subset[a]$ every whose $r$-tuple intersection contains at most $s$ elements we have
$$|S_1|+\ldots+|S_m|\le r(ma^{1-1/r}s^{1/r}+a).$$
\end{lemma}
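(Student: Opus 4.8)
The plan is to mimic the Kővári–Sós–Turán double-counting argument. For each element $x\in[a]$ let $d(x)$ be the number of sets $S_i$ that contain $x$; counting incidences in two ways gives $\sum_{i=1}^m|S_i|=\sum_{x=1}^a d(x)=:D$, and $D$ is what I must bound. First I would count the pairs $(x,T)$ with $T\subseteq[m]$, $|T|=r$, and $x\in\bigcap_{i\in T}S_i$. Summing over $T$ and using the hypothesis that every $r$-fold intersection has at most $s$ elements bounds the count by $s\binom{m}{r}$; summing over $x$ gives exactly $\sum_x\binom{d(x)}{r}$, since the admissible $T$ through a fixed $x$ are the $r$-subsets of the $d(x)$ sets containing $x$. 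Thus
\[ \sum_{x=1}^a\binom{d(x)}{r}\le s\binom{m}{r}. \]

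Next I would turn this into a bound on $D$ using convexity. Since the integer sequence $n\mapsto\binom{n}{r}$ is convex, Jensen's inequality gives $a\binom{D/a}{r}\le\sum_x\binom{d(x)}{r}$, where $\binom{D/a}{r}$ denotes the value at $D/a$ of the convex piecewise-linear interpolant of that sequence. Combining with the previous display and the elementary estimates $\binom{m}{r}\le m^r/r!$ and $\binom{t}{r}\ge (t-r+1)^r/r!$ for $t\ge r-1$, the factorials cancel and I obtain $(D/a-r+1)^r\le sm^r/a$, hence $D/a\le r-1+m(s/a)^{1/r}$ and so $D\le a(r-1)+ma^{1-1/r}s^{1/r}$.

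Finally I would dispose of the degenerate regime. If $D/a<r-1$ (which in particular covers $m<r$, where the incidence count is vacuous), then $D<a(r-1)\le ra$ and the claim holds at once; otherwise the estimate above applies, and since $r\ge1$ we have $a(r-1)\le ra$ and $ma^{1-1/r}s^{1/r}\le r\,ma^{1-1/r}s^{1/r}$, whence $D\le r\bigl(ma^{1-1/r}s^{1/r}+a\bigr)$.

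I expect the one delicate point to be the convexity step. The catch is that $\binom{d(x)}{r}$ must be read as the combinatorial binomial coefficient, which vanishes for $d(x)<r$ instead of following the polynomial $\binom{t}{r}$ into negative values; accordingly the Jensen estimate has to be routed through the convex interpolant of the integer sequence rather than through the raw polynomial. Everything else is routine, and the factor $r$ together with the additive $a$ in the statement is precisely the slack needed to absorb the shift $-r+1$ and the replacement of $\binom{m}{r}$ by $m^r/r!$.
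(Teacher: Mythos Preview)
Your argument is correct and follows the same K\H{o}v\'ari--S\'os--Tur\'an scheme as the paper: double-count pairs $(x,T)$ to get $\sum_x\binom{d(x)}{r}\le s\binom{m}{r}$, then turn this into a bound on $\sum_x d(x)$ via convexity. The only difference is in how the convexity step is executed. The paper first strips off the elements with $d_q<r$ (these contribute at most $ra$), and on the remaining elements applies the power-mean inequality together with the pointwise estimate $d_q^r\le r^r\binom{d_q}{r}$, valid precisely when $d_q\ge r$. You instead keep all elements together and apply Jensen to the convex piecewise-linear interpolant of $n\mapsto\binom{n}{r}$, then use $\hat f(t)\ge (t-r+1)^r/r!$; this avoids the explicit case split (it is hidden in your final ``degenerate regime'' paragraph) and even yields the slightly sharper intermediate bound $D\le (r-1)a+ma^{1-1/r}s^{1/r}$ before you relax it to the stated form. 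Both routes are standard and equally short.
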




The case $r=3$ of Theorem \ref{t:flosko} and of Lemma \ref{l:parsa} is essentially proved in \cite[\S3]{Pa15},
see \cite{Pa20}.
The case of arbitrary $r$ is analogous.

\begin{proof}[Proof of Lemma \ref{l:parsa}]
Denote by $d_q$ the number of subsets among $S_1,\ldots,S_m$ containing element $q\in[a]$.
We may assume that there is $\nu\le a$ such that $d_q\ge r$ when $q\le\nu$ and $d_q<r$ when $q>\nu$.
Then the required  inequality follows by
$$
\sum_{j=1}^m|S_j| = \sum_{q=1}^a d_q < ra+\sum_{q=1}^\nu d_q\quad\text{and}
$$
$$
\left(\sum_{q=1}^\nu d_q\right)^r\ \overset{(1)}\le\ \nu^{r-1}\sum_{q=1}^\nu d_q^r
\ \overset{(2)}\le\  r^r\nu^{r-1} \sum_{q=1}^\nu{d_q\choose r}\ \le\ r^ra^{r-1} \sum_{q=1}^a{d_q\choose r}
\ \overset{(3)}=
$$
$$
\overset{(3)}=\ r^ra^{r-1}\sum\limits_{\{j_1,\ldots,j_r\}}|S_{j_1}\cap\ldots\cap S_{j_r}|
\ \overset{(4)}\le\ r^ra^{r-1}s{m\choose r}
\ <\ r^rm^ra^{r-1}s.
$$
Here

$\bullet$ the inequality (1) is the inequality between the arithmetic mean and the degree $r$ mean;

$\bullet$ since $d_q\ge r$ when $q\le\nu$, the inequality (2) follows by
$$r^r{d_q\choose r} = \frac{r^rd_q^r}{r!}\left(1-\frac1{d_q}\right)\left(1-\frac2{d_q}\right)\ldots\left(1-\frac{r-1}{d_q}\right) \ge \frac{r^rd_q^r}{r!}\frac{r-1}r\frac{r-2}r\ldots\frac1r=d_q^r;$$

$\bullet$ the (in)equalities (3) and (4) are obtained by double counting the number of pairs $(\{j_1,\ldots,j_r\},q)$ of an $r$-element subset of $[m]$ and $q\in S_{j_1}\cap\ldots\cap S_{j_r}$.
\end{proof}



\begin{proof}[Proof of Theorem \ref{t:flosko}]
Induction on $d$.
The base $d=1$ follows because if a graph on $n$ vertices does not contain a subgraph homeomorphic to $K_{r,r}$, then the graph does not contain a subgraph homeomorphic to $K_{2r}$ and hence has $O(n)$ vertices \cite{BT98} (this  was apparently proved in the paper \cite{Ma68} which is not easily available to me).


Let us prove the inductive step.
If a $d$-dimensional simplicial complex $K$ having $n$ vertices does not contain a subcomplex homeomorphic to $[r]^{*(d+1)}$, then any $r$-tuple intersection of the links of vertices from $K$ does not contain a subcomplex homeomorphic to $[r]^{*d}$.
Apply Lemma \ref{l:parsa} to the set of $a\le{n\choose d}<n^d$ simplices of $K$ having dimension $d-1$,
and to $m=n$ subsets defined by links of the vertices.
By the inductive hypothesis $s\le Cn^{d-r^{2-d}}$.
Hence the number of $d$-simplices of $K$ does not exceed
$rnn^{(r-1)d/r}\left(Cn^{d-r^{2-d}}\right)^{1/r}=C'n^{d+1-r^{1-d}}$.
\end{proof}



The following version of Lemma \ref{l:parsa} is also possibly known.
It was (re)invented by I. Mitrofanov and the author in discussions of the $r$-fold Khintchine recurrence theorem,
see \cite[Problem 5]{OC}.

\begin{lemma}
\label{l:misk} For every integers $r,m,a$ and subsets $S_1,\ldots,S_m\subset[a]$ we have
$$a^{r-1}\sum_{j_1,\ldots,j_r=1}^m |S_{j_1}\cap\ldots\cap S_{j_r}| \ge \left(\sum_{j=1}^m|S_j|\right)^r.$$
\end{lemma}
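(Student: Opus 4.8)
The plan is to reduce the claimed inequality to the power mean inequality---that is, to inequality (1) in the proof of Lemma \ref{l:parsa}---by the same double counting over the elements of $[a]$. For each $q\in[a]$ let $d_q$ denote the number of indices $j\in[m]$ with $q\in S_j$.

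First I would rewrite the right-hand side. Double counting the incidences between indices $j$ and elements $q$ gives $\sum_{j=1}^m|S_j|=\sum_{q=1}^a d_q$, so the right-hand side equals $\left(\sum_{q=1}^a d_q\right)^r$.

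The main step is to rewrite the left-hand side in the same terms. Since $|S_{j_1}\cap\ldots\cap S_{j_r}|$ counts the elements $q$ lying in all of $S_{j_1},\ldots,S_{j_r}$, summing over all $r$-tuples $(j_1,\ldots,j_r)\in[m]^r$ and interchanging the order of summation yields
$$\sum_{j_1,\ldots,j_r=1}^m|S_{j_1}\cap\ldots\cap S_{j_r}|=\sum_{q=1}^a d_q^r,$$
because for a fixed $q$ the number of $r$-tuples all of whose sets contain $q$ is exactly $d_q^r$. Hence the left-hand side equals $a^{r-1}\sum_{q=1}^a d_q^r$.

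With these two identities in hand, the assertion becomes $a^{r-1}\sum_{q=1}^a d_q^r\ge\left(\sum_{q=1}^a d_q\right)^r$, which is precisely the inequality between the arithmetic mean and the degree $r$ mean applied to $d_1,\ldots,d_a$ (equivalently, convexity of $x\mapsto x^r$ via Jensen). I expect no genuine obstacle: the only point that needs care is justifying the interchange of summation that produces $\sum_q d_q^r$ on the left, after which the conclusion is immediate. It is worth noting that this lemma is in effect the identity packaging of step (1) in Lemma \ref{l:parsa}, now stated with the full symmetric sum over ordered $r$-tuples rather than over unordered $r$-subsets.
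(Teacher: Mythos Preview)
Your proof is correct and follows essentially the same idea as the paper's: double count to express both sides in terms of how many of the $S_j$ contain each element, then apply the power mean (Jensen) inequality. The paper phrases the double counting via the atom decomposition $\mu_A$ (grouping elements $q$ by the set $A=\{j:q\in S_j\}$, so that $|A|=d_q$) and then uses the \emph{weighted} power mean inequality with weights $\mu_A$; your version with the degree sequence $d_1,\ldots,d_a$ is the same argument written without this intermediate grouping, and is arguably a bit more direct.
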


\begin{proof}
Consider the decomposition of $[a]$ by the sets $S_j$ and their complements.
The sets of this decomposition correspond to subsets of $[m]$.
Denote by $\mu_A$ the number of elements in the set of this decomposition corresponding to a subset $A\subset[m]$.

To every pair $(A,j)$ of a subset $A\subset[m]$ and a number $j\in [m]$ assign 0 if $j\not\in A$ and assign $\mu_A$ if $j\in A$.
Let us double count the sum $\Sigma$ of the obtained $2^m\cdot m$ numbers.
We obtain
$$\sum\limits_{j=1}^m|S_j|=\Sigma=\sum\limits_{A\subset[m]}|A|\mu_A.$$
To every pair $(A,(j_1,\ldots,j_r))$ of a subset $A\subset[m]$ and a vector $(j_1,\ldots,j_r)\in [m]^r$ assign 0 if $\{j_1,\ldots,j_r\}\not\subset A$ and assign $\mu_A$ if $\{j_1,\ldots,j_r\}\subset A$.
Let us double count the sum $\Sigma_r$ of the obtained $2^m\cdot m^r$ numbers.
We obtain
$$\sum\limits_{j_1,\ldots,j_r=1}^m|S_{j_1}\cap\ldots\cap S_{j_r}|=\Sigma_r=\sum\limits_{A\subset[m]}|A|^r\mu_A.$$
Hence by the inequality between the weighted arithmetic mean and the weighted degree $r$ mean, and using
$\sum_{A\subset[m]}\mu_A=a$, we obtain
$$a^{r-1}\Sigma_r \ge \left(\sum\limits_{A\subset[m]}|A|\mu_A\right)^r = \left(\sum\limits_{j=1}^n|S_j|\right)^r.$$
\end{proof}

{\it Books, surveys and expository papers in this list are marked by the stars.}

\end{document}